\tikzset{>=latex}
\newcommand{\stringdiagram}[1]{\[\begin{tikzpicture}[scale=.5, thick]
#1\end{tikzpicture}\]}
\newcommand{\identity}[2]{\draw (#1,#2+1) -- (#1,#2-1);}
\newcommand{\unit}[2]{\draw (#1,#2+.5) circle [radius=.1]; \draw (#1,#2+.4) -- (#1,#2-1);}
\newcommand{\counit}[2]{\draw (#1,#2-.5) circle [radius=.1]; \draw (#1,#2-.4) -- (#1,#2+1);}
\newcommand{\multiplication}[2] {\draw (#1,#2) -- (#1-1,#2+1); \draw (#1,#2) -- (#1+1,#2+1); \draw (#1,#2) -- (#1,#2-1);}
\newcommand{\comultiplication}[2] {\draw (#1,#2) -- (#1-1,#2-1); \draw (#1,#2) -- (#1+1,#2-1); \draw (#1,#2) -- (#1,#2+1);}
\newcommand{\copairing}[2]{\draw (#1+1,#2-1) arc(0:180:1);}
\newcommand{\equals}[2]{\node at (#1,#2) {$=$};}
\newcommand*{\da@rightarrow}{\mathchar"0\hexnumber@\symAMSa 4B }
\newcommand*{\xdashrightarrow}[2][]{%
  \mathrel{%
    \mathpalette{\da@xarrow{#1}{#2}{}\da@rightarrow{\,}{}}{}}}
\newcommand*{\da@xarrow}[7]{
\sbox0{$\ifx#7\scriptstyle\scriptscriptstyle\else\scriptstyle\fi#5#1#6\m@th$}
  \sbox2{$\ifx#7\scriptstyle\scriptscriptstyle\else\scriptstyle\fi#5#2#6\m@th$}
  \sbox4{$#7\dabar@\m@th$}
  \dimen@=\wd0 
  \ifdim\wd2 >\dimen@
    \dimen@=\wd2 
  \fi
  \count@=2 %
  \def\da@bars{\dabar@\dabar@}%
  \@whiledim\count@\wd4<\dimen@\do{
    \advance\count@\@ne \expandafter\def\expandafter\da@bars\expandafter{\da@bars\dabar@ }}
  \mathrel{#3}\mathrel{\mathop{\da@bars}\limits \ifx\\#1\\\else _{\copy0} \fi \ifx\\#2\\
   \else  ^{\copy2} \fi}   \mathrel{#4}}
\newcommand{\cat}{\mathcal{C}}
\DeclareMathOperator{\Ob}{Ob}
\DeclareMathOperator{\Mor}{Mor}
\newcommand{\catname}[1]{\mathbf{#1}}
\newcommand{\rel}{\catname{Rel}}
\newcommand{\Span}{\catname{Span}}
\newcommand{\id}{\mathrm{id}}
\newcommand{\idx}{\mathbf{1}}
\newcommand{\suchthat}{\mid}
\newcommand{\Z}{\mathbb{Z}}
\renewcommand{\tilde}{\widetilde}
\renewcommand{\emptyset}{\varnothing}
\newcommand{\power}{\mathcal{P}}
\newtheorem{thm}{Theorem}[section]
\newtheorem{prop}[thm]{Proposition}
\newtheorem{lemma}[thm]{Lemma}
\theoremstyle{definition}
\newtheorem{definition}[thm]{Definition}
\newtheorem{remark}[thm]{Remark}
\numberwithin{equation}{section}
\begin{document}

\title{On Examples and Classification of Frobenius Objects in Rel}
\author{Ivan Contreras}
\address{Department of Mathematics\\
Amherst College\\
31 Quadrangle Drive\\
Amherst, MA 01002}
\email{icontreraspalacios@amherst.edu}
\author{Adele Long}
\author{Sophia Marx}
\address{Department of Mathematics \& Statistics\\
University of Massachusetts, Amherst\\
710 N Pleasant St\\
Amherst, MA 01003}
\email{semarx@umass.edu}
\author{Rajan Amit Mehta}
\address{Department of Mathematics \& Statistics\\
Smith College\\
44 College Lane\\
Northampton, MA 01063}
\email{AdeleLRLong@gmail.com}
\email{rmehta@smith.edu}

\subjclass[2020]{
18B10, 
18B40, 
18C40, 
18N50, 
20L05, 
57R56
} 
\keywords{category of relations, Frobenius algebra, groupoid, simplicial set, topological quantum field theory}

\begin{abstract}
We give some new examples of Frobenius objects in the category of sets and relations $\rel$. One example is a groupoid with a twisted counit. Another example is the set of conjugacy classes of a group. We also classify Frobenius objects in $\rel$ with two or three elements, and we compute the associated surface invariants using the partition functions of the corresponding TQFTs. 
\end{abstract}

\maketitle

\section{Introduction}

A basic result in topological quantum field theory (TQFT) is the correspondence between $2$-dimensional oriented TQFTs and commutative Frobenius algebras \cites{abrams, dijkgraaf:thesis}. This result can be placed in a more general framework by defining a (commutative) Frobenius object in a (symmetric) monoidal category  $\cat$ (see, e.g. \cite{kock-book}). Then a proof of the above correspondence can be reinterpreted as a proof that the $2$-dimensional oriented cobordism category is isomorphic to the free symmetric monoidal category generated by one commutative Frobenius object. The upshot of this is that the study of Frobenius objects in any symmetric monoidal category $\cat$ has topological significance.

In this paper, we study Frobenius objects in the category $\rel$, whose objects are sets and whose morphisms are relations of sets. Such structures appeared in \cite{hcc} (also see \cite{heunen-vicary:book}), where it was shown that special dagger Frobenius objects in $\rel$ are in correspondence with groupoids. This result was extended in \cite{Mehta-Zhang}, where it was shown that a Frobenius object in $\rel$ can be encoded in a simplicial set equipped with an automorphism of the set of $1$-simplices, satisfying certain properties.

Frobenius objects in the category of spans (which is closely related to $\rel$) have also been recently considered. In \cite{stern:2segal}, it was shown that symmetric Frobenius objects in $\Span$ that are coherent (in the $2$-categorical sense) are in correspondence with cyclic $2$-Segal sets. An analogous correspondence at the $1$-categorical level was given in \cite{CKM}. There is a symmetric monoidal functor $\Span \to \rel$, so any Frobenius object in $\Span$ gives rise to one in $\rel$. 

It turns out \cites{li-bland-weinstein, CKM} that $\Span$ can be viewed as a set-theoretic model for the Wehrheim-Woodward symplectic category \cite{weinstein:ww, ww}. Thus the study of Frobenius objects in $\rel$ provides a relatively simple setting to better understand TQFT with values in the symplectic category. Since symplectic groupoids are examples of Frobenius objects in the symplectic category, we expect this direction of research to shed light on the role of symplectic groupoids as reduced phase spaces of the Poisson sigma model, a $2$-dimensional topological field theory \cite{Contreras2015}. There may also be connections to higher-dimensional TQFT, such as Dikjgraaf-Witten theory \cite{dijgraafwitten}.

The main results of this paper are as follows:
\begin{itemize}
    \item We describe new examples of Frobenius objects in $\rel$. One is a generalization of the groupoid example which allows for a twist that can spoil the special and dagger properties. As we will see, the twists are necessary to obtain nontrivial topological invariants. Another example is the set of conjugacy classes of a group. These examples can have multivalued multiplication relations.
    \item We completely classify Frobenius objects in $\rel$ with two or three elements. We find that, up to isomorphism, there are $5$ Frobenius objects in $\rel$ with two elements, and there are $25$ Frobenius objects in $\rel$ with three elements. 
    This suggests that most examples are not groupoids, since only $5$ of the three-element examples are groupoids.
    \item For all of the two- and three-element Frobenius objects in $\rel$, we compute the partition function, which encodes the associated topological invariants for closed orientable surfaces. These partition functions can be viewed as logical propositions dependent on the genus of the surface.
\end{itemize}

Our low-cardinality classification was done completely by hand, but it involves some simplifications and systematic calculations that should allow for an extension to higher cardinalities with some computer assistance. 
\subsection*{Acknowledgements}
We would like to thank Pavel Mnev and Walker Stern for stimulating discussions on topics related to this paper. I.C. thanks the Amherst College Provost and Dean of the Faculty’s Research Fellowship (2021-2022).
\section{Frobenius objects and TQFT}

In this section, we review the definition of a Frobenius object in a (symmetric) monoidal category $\mathcal{C}$, and we briefly explain the relationship to $2$-dimensional TQFT.

\subsection{Frobenius objects in a monoidal category}

Let $\mathcal{C}$ be a monoidal category with monoidal unit $1$ and monoidal product $\otimes$. For simplicity, we will assume that $\mathcal{C}$ is strict monoidal, though everything in this section works more generally, with appropriate modification.

\begin{definition}
A \emph{Frobenius object} in $\mathcal{C}$ is an object $X \in \Ob(\mathcal{C})$ equipped with morphisms 

\begin{itemize}
\item $\eta: 1 \to X$ (Unit) 
\item $\mu: X \otimes X \to X$ (Multiplication)
\item $\varepsilon: X \to 1$ (Counit)
\end{itemize}
satisfying the following axioms:
\begin{enumerate}
    \item Unitality: $\mu \circ (\idx \otimes \eta) = \mu \circ (\eta \otimes \idx) = \idx$
    \item Associativity: $\mu \circ (\idx \otimes \mu) = \mu \circ (\mu \otimes \idx)$
    \item Nondegeneracy: There exists $\beta: 1 \to X \otimes X$ such that $(\varepsilon \otimes \idx) \circ (\mu \otimes \idx) \circ (\idx \otimes \beta) = (\idx \otimes \varepsilon) \circ (\idx \otimes \mu) \circ (\beta \otimes \idx) = \idx$.
\end{enumerate}
\end{definition}
In the case where $\mathcal{C}$ is the monoidal category of vector spaces with the tensor product, one recovers the notion of \emph{Frobenius algebra}.

It can be helpful to use string diagrams to describe morphisms built out of the structure morphisms for a Frobenius object. We denote the unit, multiplication, and counit by the following diagrams, read from top to bottom.
\stringdiagram{\unit{0}{0} \multiplication{6}{0} \counit{12}{0}}
It can be proven that $\beta$ in the nondegeneracy condition is unique. It is denoted as follows.
\stringdiagram{\copairing{0}{0}}
The equations in the axioms can be rewritten using string diagrams; see Figure \ref{fig:axioms}.
\begin{figure}
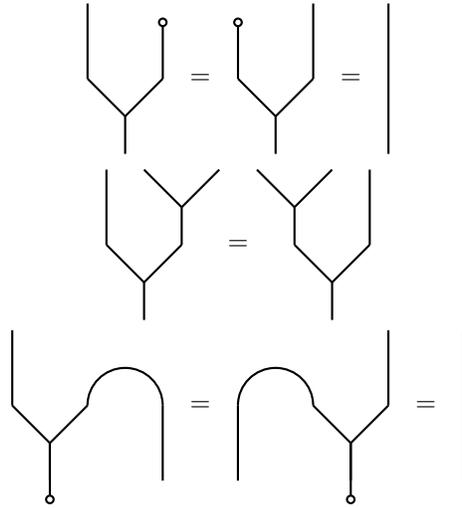

\stringdiagram{
\begin{scope} 
\unit{-2}{1}
\identity{-4}{1}
\multiplication{-3}{-1}
\equals{-1}{0}
\unit{0}{1}
\identity{2}{1}
\multiplication{1}{-1}
\equals{3}{0}
\identity{4}{1}
\identity{4}{-1}
\end{scope}}
\stringdiagram{
\begin{scope}[shift={(8,0)}] 
\identity{0}{1}
\multiplication{2}{1}
\multiplication{1}{-1}
\equals{3.5}{0}
\multiplication{5}{1}
\identity{7}{1}
\multiplication{6}{-1}
\end{scope}}
\stringdiagram{\begin{scope}[shift={(20,-1)}]
 \identity{-1}{2}
\copairing{2}{2}
\multiplication{0}{0}
\counit{0}{-1}
\identity{3}{0}
\equals{4}{1}
\copairing{6}{2}
\identity{9}{2}
\identity{5}{0}
\multiplication{8}{0}
\counit{8}{-1}
\equals{10}{1}
\identity{11}{2}
\identity{11}{0}
\end{scope}
}
    \caption{The unitality, associativity, and nondegeneracy axioms via string diagrams.}
    \label{fig:axioms}
\end{figure}

Given a Frobenius object, we may define a comultiplication $\delta: X \to X \otimes X$ as follows.
\stringdiagram{
\comultiplication{0}{0}
\equals{2}{0}
\copairing{4}{1}
\identity{7}{1}
\identity{3}{-1}
\multiplication{6}{-1}
}
One can then use the axioms to show that the comultiplication is counital and coassociative. Another nice exercise for the reader is to show that the \emph{Frobenius condition} in Figure \ref{fro} is satisfied. 
\begin{figure}[h]
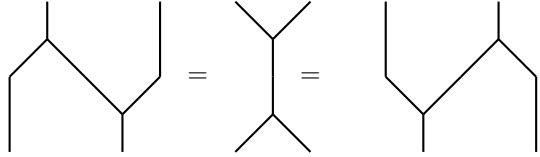


\stringdiagram{
\comultiplication{0}{0}
\identity{-1}{-2}
\multiplication{2}{-2}
\identity{3}{0}
\equals{4}{-1}
\multiplication{6}{0}
\comultiplication{6}{-2}
\equals{7}{-1}
\identity{9}{0}
\multiplication{10}{-2}
\comultiplication{12}{0}
\identity{13}{-2}

}
\caption{The Frobenius condition.}
\label{fro}
\end{figure}

We note that the definition of Frobenius object can be stated in different equivalent ways; see, for example, \cites{kock-book,Mehta-Zhang}.

\subsection{Relation to TQFT}\label{sec:tqft}

If $\mathcal{C}$ is a symmetric monoidal category, then we can define commutative Frobenius objects. A well-known result \cites{abrams,dijkgraaf:thesis} is that commutative Frobenius objects in $\mathcal{C}$ correspond to $\mathcal{C}$-valued $2$-dimensional TQFTs, i.e.\ symmetric monoidal functors from the 2D oriented cobordism category to $\mathcal{C}$. We refer the reader to \cite{kock-book} for details about cobordism categories and a concise proof of this result.

In particular, a commutative Frobenius object gives invariants of closed orientable surfaces. The invariants appear as the output of the \emph{partition function} $Z$ of the theory, which takes closed orientable surfaces as input and takes values in the commutative monoid $\Mor_\mathcal{C}(1,1)$. The value of the partition function on the closed orientable surface $\Sigma_g$ of genus $g$ is explicitly given by the formula
\begin{equation}\label{eqn:partition}
Z(\Sigma_g)= \varepsilon \circ (\mu \circ \delta)^g \circ \eta.
\end{equation}
The corresponding string diagrams are in Figure \ref{fig:invariants}.
\begin{figure}
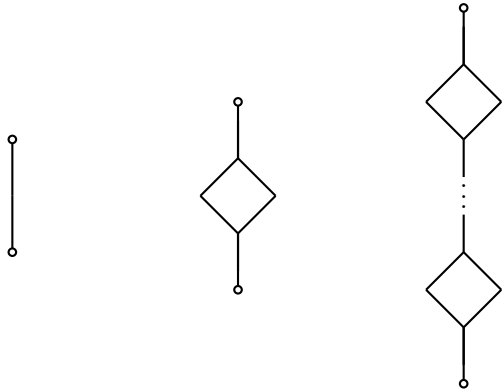

\stringdiagram{
\begin{scope}
\unit{0}{1}
\counit{0}{-1}
\end{scope}

\begin{scope}[shift={(6,0)}]
\unit{0}{2}
\comultiplication{0}{1}
\multiplication{0}{-1}
\counit{0}{-2}
\end{scope}

\begin{scope}[shift={(12,3.5)}]
\unit{0}{1}
\comultiplication{0}{0}
\multiplication{0}{-2}
\node at (0,-3.3) [] (f) {$\vdots$};
\comultiplication{0}{-5}
\multiplication{0}{-7}
\counit{0}{-8}
\end{scope}
}
    \caption{Invariants associated to the sphere, torus, and higher genus surfaces.}
    \label{fig:invariants}
\end{figure}

\section{Frobenius objects in Rel}\label{sec:frob}
In this section, we briefly review the category of relations $\rel$, and we then describe Frobenius objects in $\rel$ associated to groupoids and conjugacy classes.

\subsection{The category of relations}\label{sec:rel}
The objects of $\rel$ are sets. A morphism from a set $X$ to a set $Y$ is a relation, i.e.\ a subset of $X \times Y$. Given relations $R \subseteq X \times Y$ and $S \subseteq Y \times Z$, the composition $S \circ R \subseteq X \times Z$ is defined as
\[ S \circ R = \{ (x,z) \suchthat (x,y) \in R \mbox{ and } (y,z) \in S \mbox{ for some } y \in Y\}.\]
The identity morphism from $X$ to $X$ is the diagonal relation:
\[ \idx = \{ (x,x) \suchthat x \in X\}.\]
The Cartesian product gives $\rel$ the structure of a symmetric monoidal category. The monoidal unit is the one-point set, which we denote as $\{\bullet\}$.

It can sometimes be convenient to think of a relation $R \subseteq X \times Y$ as a generalized map that can take any number of values. To make this idea precise, we define a map $\tilde{R}: X \to \power(Y)$, where $\power(Y)$ is the power set of $Y$, given by
\[ \tilde{R}(x) = \{y \in Y \suchthat (x,y) \in R\}.\]
This gives a one-to-one correspondence between relations $R \subseteq X \times Y$ and maps $\tilde{R}: X \to \power(Y)$. In terms of the latter, composition is given by
\begin{equation} \label{eqn:compositiontilde}
\widetilde{S \circ R}(x) = \bigcup_{y \in \tilde{R}(x)} \tilde{S}(y).
\end{equation}

The data of a Frobenius object in $\rel$ consists of a set $X$, together with subsets $\eta \subseteq \{\bullet\} \times X \cong X$, $\varepsilon \subseteq X \times \{\bullet\} \cong X$, and $\mu \subseteq X \times X \times X$. In the remainder of the paper, we will view $\eta$ and $\varepsilon$ as subsets of $X$, and we will usually consider $\mu$ via the corresponding map $\tilde{\mu}: X \times X \to \power(X)$.

When checking that the data $(X,\eta, \varepsilon, \tilde{\mu})$ satisfies the axioms of a Frobenius object, unitality and associativity can be checked directly, but the nondegeneracy axiom is a bit clumsy as given.
In \cite{Mehta-Zhang}, a useful characterization of the nondegeneracy axiom is given. The condition says that, for every $x \in X$, there exists a unique $y \in X$ such that $\tilde{\mu}(x,y) \cap \varepsilon$ is nonempty. If this condition is satisfied, the correspondence $x \mapsto y$ determines a bijection $\hat{\alpha}: X \to X$.

From \cite{Mehta-Zhang}, we also have that, if $(X,\eta, \varepsilon, \tilde{\mu})$ is a Frobenius object in $\rel$, then the comultiplication, viewed as a map $\tilde{\delta}: X \to \power(X \times X)$, can be expressed in terms of $\tilde{\mu}$ and the bijection $\hat{\alpha}$, as follows: 
\begin{equation}\label{eqn:comultiplication}
\tilde{\delta}(x) = \{(\hat{\alpha}(y), z) \suchthat y \in X, z \in \tilde{\mu}(y,x)\}.
\end{equation}

\subsection{Disjoint unions} \label{sec:disjoint}

Let $(X, \eta_X, \varepsilon_X, \tilde{\mu}_X)$ and $(Y, \eta_Y, \varepsilon_Y, \tilde{\mu}_Y)$ be Frobenius objects in $\rel$. Then we may define a Frobenius structure on the disjoint union $X \sqcup Y$ as follows:
\begin{itemize}
    \item $\eta_{X \sqcup Y} = \eta_X \sqcup \eta_Y$,
    \item $\varepsilon_{X \sqcup Y} = \varepsilon_X \sqcup \eta_Y$, and
    \item the multiplication is in block form, i.e.\ for $x,x' \in X$ and $y, y' \in Y$, we set 
    \begin{align*}
        \tilde{\mu}_{X \sqcup Y} (x,x') &= \tilde{\mu}_X(x,x'), \\
        \tilde{\mu}_{X \sqcup Y} (y,y') &= \tilde{\mu}_Y(y,y'), \\
        \tilde{\mu}_{X \sqcup Y} (x,y) &= \tilde{\mu}_{X \sqcup Y} (y,x) = \emptyset.
    \end{align*}
\end{itemize}
It is fairly straightforward to check that the axioms of a Frobenius object hold for $X \sqcup Y$ as a consequence of the fact that they hold for $X$ and $Y$.

\subsection{Example: groupoids} \label{sec:groupoids}
In \cite{hcc} it was shown that groupoids can be seen as Frobenius objects in $\rel$. Conversely, they showed that any Frobenius object in $\rel$ satisfying the extra properties of being \emph{special} and \emph{dagger} is associated to a groupoid. Here, we briefly review this correspondence before introducing the additional possibility of a twisted counit.

Recall that a \emph{groupoid} is a small category where all morphisms are invertible. In more concrete terms, a groupoid $G$ consists of sets $G_0, G_1$, equipped with \emph{source} and \emph{target} maps $s,t: G_1 \to G_0$ and a \emph{multiplication} operation $(g,h) \mapsto g \cdot h$, defined when $s(g) = t(h)$ for $g,h \in G_1$, such that
\begin{enumerate}
    \item $s(g \cdot h) = s(h)$ and $t(g \cdot h) = t(g)$ for $g,h \in G_1$ such that $s(g) = t(h)$,
    \item $(g\cdot h) \cdot k = g \cdot (h \cdot k)$ for all $g,h,k \in G_1$ such that $s(g) = t(h)$ and $s(h) = t(k)$,
    \item there exists an \emph{identity} map $e: G_0 \to G_1$ such that $s \circ e = t \circ e = \id$ and $g \cdot e(s(g)) = e(t(g)) \cdot g = g$ for all $g \in G_1$,
    \item there exists an \emph{inverse map} $G_1 \to G_1$, $g \mapsto g^{-1}$, such that $s(g^{-1}) = t(g)$, $t(g^{-1}) = s(g)$, $g \cdot g^{-1} = e(t(g))$, and $g^{-1} \cdot g = e(s(g))$ for all $g \in G_1$.
\end{enumerate}
As with groups, the identity and inverse maps for a groupoid are unique.

Given a groupoid $G$, the corresponding Frobenius object in $\rel$ given in \cite{hcc} is as follows:
\begin{itemize}
    \item $X=G_1$,
    \item $\eta = e(G_0)$,
    \item $\tilde{\mu}(g,h) = \{g \cdot h\}$,
    \item $\varepsilon = e(G_0)$.
\end{itemize}
Associativity and unitality hold as a result of conditions (2) and (3) above. Nondegeneracy holds as a result of condition (4); in particular, the associated bijection $\hat{\alpha}$ is given by $\hat{\alpha}(g) = g^{-1}$.

This example can be generalized by introducing the possibility of ``twisting'' the counit as follows. Let $\sigma: G_0 \to G_1$ be a section of $t$, and set $\varepsilon = \sigma(G_0)$. With this choice of counit, one can see that the nondegeneracy condition still holds, with $\hat{\alpha}(g) = g^{-1} \cdot \sigma(t(g))$. We will see in Section \ref{sec:invariants} that this generalization is necessary to obtain nontrivial topological invariants.

\subsection{Example: conjugacy classes}\label{subsec:Conjugacy}
Let $G$ be a group, and let $X$ be the set of conjugacy classes of $G$. Set $\eta = \varepsilon = \{[e]\}$, and let $\tilde{\mu}: X \times X \to \power(X)$ be given by
\[ \tilde{\mu}(C_1,C_2) = \{ C_3 \suchthat g_1g_2 = g_3 \mbox{ for some } g_i \in C_i\}. \]
Then $X$ is a Frobenius object in $\rel$. The associated bijection $\hat{\alpha}$ is given by $\hat{\alpha}([g]) = [g^{-1}]$. We note that nondegeneracy holds due to the not-completely-obvious fact that, for any $g \in G$, the conjugacy class of $g^{-1}$ is the set of inverses of elements of the conjugacy class of $g$.
  
This is in fact an example of a commutative Frobenius object in $\rel$, since $gh$ and $hg$ are conjugate to each other for all $g,h \in G$: 
 \[g^{-1} (gh) g=hg. \]
It is also an example in which the multiplication can be multi-valued. This is notable, because all examples of Frobenius objects in $\rel$ given in \cite{hcc} and \cite{Mehta-Zhang} have (at most) single-valued multiplication.

In Section \ref{sec:3element}, we will see the set of conjugacy classes of $S_3$ appearing as an explicit example.

\section{Topological invariants}\label{sec:invariants}

Any commutative Frobenius object in $\rel$ gives rise to surface invariants (see Section \ref{sec:tqft}). These invariants take values in $\Mor_\rel(\{\bullet\},\{\bullet\}) \cong \power(\{\bullet\} \times \{\bullet\}) \cong \power(\{\bullet\})$. It can be useful to identify the values with Booleans, where $F$ is identified with $\emptyset$ and $T$ is identified with $\{\bullet\}$. Under this identification, the monoidal operation, arising from composition of relations, corresponds to the AND operator.

Thus, the partition function associated to a commutative Frobenius object in $\rel$, given by \eqref{eqn:partition}, can be viewed as a Boolean-valued function of the genus $g$.

Let $(X, \eta, \varepsilon, \tilde{\mu})$ be a commutative Frobenius object in $\rel$, with comultiplication and associated bijection $\hat{\alpha}$ as described in Section \ref{sec:rel}. Let $S = \mu \circ \delta$. Using \eqref{eqn:compositiontilde} and \eqref{eqn:comultiplication}, we have
\begin{equation}\label{eqn:S}
    \begin{split}
        \tilde{S}(x) &= \widetilde{\mu \circ \delta}(x) \\
        &= \bigcup_{\phi \in \tilde{\delta}(x)} \tilde{\mu}(\phi) \\
        &= \bigcup_{y \in X} \bigcup_{z \in \tilde{\mu}(y,x)} \tilde{\mu}(\hat{\alpha}(y),z).
    \end{split}
\end{equation}
This general formula may not seem very enlightening, but in practice it often leads to a relatively nice description of $S$. From this, we can determine $S^g \circ \eta$, which, as a relation from $\{\bullet\}$ to $X$, can be viewed as a subset of $X$.
Then the value of the partition function $Z(\Sigma_g) = \varepsilon \circ S^g \circ \eta$ is $T$ if $(S^g \circ \eta) \cap \varepsilon$ is nonempty and $F$ if $(S^g \circ \eta) \cap \varepsilon = \emptyset$.

\begin{prop} \label{prop: partition_group}Let $G$ be a finite abelian group. For fixed $\omega \in G$, let $X=G$ be the Frobenius object in $\rel$ associated to $G$ with counit $\varepsilon = \{\omega\}$ (see Section \ref{sec:groupoids}). Then the associated partition function is given by
\begin{equation}
Z(\Sigma_g) = \begin{cases}
T \mbox{ if } (g-1)\omega=0 \\
F \mbox{ otherwise. }
\end{cases}
\end{equation}
\end{prop}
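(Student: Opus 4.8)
The plan is to specialize the general formula \eqref{eqn:S} to the groupoid-with-twisted-counit structure attached to a finite abelian group $G$. Here $G_0$ is a point, $G_1 = G$, the multiplication is the group operation, $\eta = \{0\}$ (writing $G$ additively), and the twisted counit is $\varepsilon = \{\omega\}$, arising from the section $\sigma$ picking out $\omega$. By the discussion at the end of Section \ref{sec:groupoids}, the associated bijection is $\hat{\alpha}(g) = -g + \omega$ (since $t$ is constant, $g^{-1}\cdot\sigma(t(g)) = -g+\omega$). Since the multiplication is single-valued, $\tilde{\mu}(a,b) = \{a+b\}$, and \eqref{eqn:S} collapses dramatically: for fixed $x$,
\[
\tilde{S}(x) = \bigcup_{y \in G}\bigcup_{z \in \tilde{\mu}(y,x)} \tilde{\mu}(\hat{\alpha}(y), z) = \bigcup_{y \in G} \{(-y+\omega) + (y + x)\} = \{\omega + x\}.
\]
So $S$ is just translation by $\omega$, a single-valued (indeed bijective) relation.

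Next I would iterate: $\tilde{S}^g$ is translation by $g\omega$, so $\widetilde{S^g\circ\eta}(\bullet) = \{g\omega\}$ — as a subset of $X = G$, the relation $S^g\circ\eta$ is the singleton $\{g\omega\}$. (For $g = 0$ this reads $\{0\} = \eta$, consistent with the empty composite.) Then, following the recipe just before the Proposition, $Z(\Sigma_g) = \varepsilon\circ S^g\circ\eta$ is $T$ precisely when $(S^g\circ\eta)\cap\varepsilon = \{g\omega\}\cap\{\omega\}$ is nonempty, i.e. when $g\omega = \omega$, equivalently $(g-1)\omega = 0$; otherwise it is $F$. This is exactly the claimed formula.

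The only genuine point requiring care — and the step I would flag as the main obstacle, though it is minor — is confirming that this Frobenius object really is \emph{commutative}, so that the partition-function apparatus of Section \ref{sec:tqft} applies: this is where finiteness and abelian-ness of $G$ are used. Commutativity of $\mu$ follows immediately from commutativity of $G$ (i.e. $\tilde{\mu}(a,b) = \{a+b\} = \{b+a\} = \tilde{\mu}(b,a)$), and finiteness guarantees we are in $\rel$ with all the relevant sets finite so the TQFT formalism and the closed-surface invariant \eqref{eqn:partition} are in force. I would also double-check the base case $g=0$ ($Z(\Sigma_0) = \varepsilon\circ\eta$, which is $T$ iff $0 = \omega$ up to the convention, matching $(0-1)\omega = -\omega = 0 \iff \omega = 0$) and the $g=1$ torus case ($Z(\Sigma_1)$ always $T$, matching $(1-1)\omega = 0$) as sanity checks. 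Everything else is the routine substitution outlined above.
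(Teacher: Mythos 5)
Your proposal is correct and follows essentially the same route as the paper's proof: compute $\hat{\alpha}(y) = \omega - y$, substitute into \eqref{eqn:S} to see that $\tilde{S}$ is translation by $\omega$, iterate to get $S^g \circ \eta = \{g\omega\}$, and intersect with $\varepsilon = \{\omega\}$. The extra remarks on commutativity and the $g=0,1$ sanity checks are fine but not needed beyond what the paper already records.
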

\begin{proof}
In this situation, we have the single-valued multiplication $\tilde{\mu}(x,y) = \{x+y\}$ and $\hat{\alpha}(x) = \omega - x$. Then, from \eqref{eqn:S} we have
\begin{equation}
\begin{split}
\tilde{S}(x) &= \bigcup_{y \in G} \{\hat{\alpha}(y) + y + x\} \\
&= \bigcup_{y \in G} \{\omega + x\} \\
&= \{\omega + x\}.
\end{split}
\end{equation}  
We then obtain 
\begin{align*}
    \widetilde{S^g}(x) &= \{g\omega + x\}, \\
    S^g \circ \eta &= \{g\omega\},
\end{align*}
so $Z(\Sigma_g) = T$ if and only if $g\omega = \omega$.
\end{proof}

The following result will be useful later in computing partition functions for the low-cardinality cases.
\begin{lemma}\label{lemma:intersection}
Let $(X, \eta, \varepsilon, \tilde{\mu})$ be a commutative Frobenius object in $\rel$. If $\eta \cap \varepsilon \neq \emptyset$, then the associated partition function is given by $Z(\Sigma_g) = T$ for all $g$.
\end{lemma}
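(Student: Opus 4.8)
The plan is to exploit the explicit description of the partition function given just before the statement, namely that $Z(\Sigma_g) = T$ precisely when $(S^g \circ \eta) \cap \varepsilon \neq \emptyset$, together with the formula \eqref{eqn:S} for $\tilde S$. The key observation I would make is that $x \in S^g \circ \eta$ whenever $x$ lies in $\tilde S^g(x_0)$ for some $x_0 \in \eta$, so it suffices to locate a single element of $\eta \cap \varepsilon$ and show it survives all the applications of $S$.

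First I would pick $w \in \eta \cap \varepsilon$, which exists by hypothesis. The main step is to show that $w \in \tilde S(w)$; iterating then gives $w \in \widetilde{S^g}(w)$ for every $g \geq 1$, and since $w \in \eta$ we get $w \in S^g \circ \eta$, while $w \in \varepsilon$ shows the intersection $(S^g \circ \eta) \cap \varepsilon$ is nonempty, so $Z(\Sigma_g) = T$; the case $g = 0$ is immediate since $S^0 \circ \eta = \eta$ and $w \in \eta \cap \varepsilon$. To prove $w \in \tilde S(w)$, I would use \eqref{eqn:S}: $\tilde S(w) = \bigcup_{y \in X} \bigcup_{z \in \tilde\mu(y,w)} \tilde\mu(\hat\alpha(y), z)$. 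The idea is to choose $y$ cleverly. Since $w \in \eta$, unitality of the Frobenius object says $w$ acts as a left (and right) unit, so $\tilde\mu(w, w) = \{w\}$; more useful is to take $y = w$: then $\tilde\mu(w,w) \ni w$, so $w$ is a valid choice of $z$, and we need $w \in \tilde\mu(\hat\alpha(w), w)$. Because $w \in \eta$, right unitality gives $\tilde\mu(\hat\alpha(w), w') \supseteq$ hmm — actually right unitality with the unit $w$ says $\tilde\mu(a, w) = \{a\}$ for all $a$, so $\tilde\mu(\hat\alpha(w), w) = \{\hat\alpha(w)\}$, and hence $w \in \tilde S(w)$ iff $\hat\alpha(w) = w$. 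So the crux reduces to showing $\hat\alpha(w) = w$ when $w \in \eta \cap \varepsilon$.

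The main obstacle is therefore this identity $\hat\alpha(w) = w$. I would establish it directly from the characterization of $\hat\alpha$ recalled in Section~\ref{sec:rel}: $\hat\alpha(x)$ is the unique element $y$ with $\tilde\mu(x,y) \cap \varepsilon \neq \emptyset$. Applying this with $x = w$: since $w$ is a unit, $\tilde\mu(w, w) = \{w\}$ (left unitality), and $w \in \varepsilon$ by assumption, so $\tilde\mu(w,w) \cap \varepsilon = \{w\} \neq \emptyset$. By uniqueness, $\hat\alpha(w) = w$, as desired. This closes the argument. I expect the only subtlety is making sure the unitality axiom is applied in the correct variable (that $\eta$ being a unit gives $\tilde\mu(w, x) = \{x\}$ and $\tilde\mu(x, w) = \{x\}$ for all $x$, interpreting the Frobenius axioms in $\rel$ via $\tilde\mu$), but this is exactly the content of axiom (1) unwound through the dictionary of Section~\ref{sec:rel}, and commutativity makes the two sides symmetric anyway.
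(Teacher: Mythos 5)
Your argument is essentially the paper's own proof: take $w\in\eta\cap\varepsilon$, show $\tilde\mu(w,w)=\{w\}$ from unitality, deduce $\hat\alpha(w)=w$ from the uniqueness characterization of nondegeneracy, conclude $w\in\tilde S(w)$ via \eqref{eqn:S}, and iterate to get $w\in (S^g\circ\eta)\cap\varepsilon$ for all $g$ (with $g=0$ immediate); this is correct and follows the same route. One caution about your justification: the claim that an element $w\in\eta$ is a genuine two-sided unit, i.e.\ $\tilde\mu(x,w)=\{x\}$ and $\tilde\mu(w,x)=\{x\}$ for all $x$, is false when $\eta$ has more than one element (e.g.\ case 5 of Table \ref{fig:2elements}, the trivial groupoid with two objects, where $\tilde\mu(b,a)=\emptyset$); unitality only gives $\tilde\mu(x,e)\subseteq\{x\}$ for each $e\in\eta$ together with $\bigcup_{e\in\eta}\tilde\mu(x,e)=\{x\}$, and similarly on the other side. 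The instances you actually need do survive: for $w\in\eta$ and any other $e'\in\eta$, left and right unitality force $\tilde\mu(w,e')\subseteq\{w\}\cap\{e'\}=\emptyset$, whence $\tilde\mu(w,w)=\{w\}$; and once $\hat\alpha(w)=w$ is established, the required containment is $w\in\tilde\mu(\hat\alpha(w),w)=\tilde\mu(w,w)$, so the ``iff'' you assert (which rests on the false general claim) is only used, and is valid, in this direction. With that justification tightened, the proof stands and matches the paper's.
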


\begin{proof}
    Let $e \in X$ be such that $e \in \eta \cap \varepsilon$. From the unitality axiom, we have that $\tilde{\mu}(e,e) = \{e\}$. It follows that $\hat{\alpha}(e) = e$, so we see from \eqref{eqn:S} that $e \in \tilde{S}(e)$. We then have that $e \in (S^g \circ \eta) \cap \varepsilon$ (including the case $g=0$), and hence $Z(\Sigma_g) = T$.
\end{proof}

\begin{remark}\label{remark:logical}
Because the partition function associated to a Frobenius object in $\rel$ is Boolean-valued, it can be viewed as a logical proposition depending on the variable $g$. This point of view can lead to a more intuitively clear description of the partition function. For example, in the situation of Proposition \ref{prop: partition_group} the partition function is given by the proposition ``$g \equiv 1 \pmod{|\omega|}$''. In the situation of Lemma \ref{lemma:intersection}, the partition function is given by the proposition ``True''.
\end{remark}

\section{The two element case}\label{sec:2element}
\subsection{The classification}
In this section, we classify the Frobenius objects in $\rel$ where $X = \{a,b\}$ is a two element set. Recall that a Frobenius structure (in $\rel$) on $X$ is given by three pieces of data: the unit $\eta \subseteq X$, the counit $\varepsilon \subseteq X$, and the multiplication $\mu \subseteq X \times X \times X$. When $X$ has two elements there are \emph{a priori} $2^2=4$ possibilities for each of $\eta$ and $\varepsilon$ and $2^8 = 256$ possibilities for $\mu$, giving a total of $2^{12} = 4096$ possibilities for the data. As we will see, the axioms for a Frobenius object impose strong restrictions, leaving only five possibilities up to isomorphism.

As in Section \ref{sec:frob}, we describe $\mu$ via the associated map $\tilde{\mu}: X \times X \to \power(X)$, which can be represented by a multiplication table
\vskip 3mm
\begin{center}
\begin{tabular}{ | c | c | } 
\hline
$\tilde{\mu}(a,a)$ & $\tilde{\mu}(a,b)$ \\
\hline
$\tilde{\mu}(b,a)$ & $\tilde{\mu}(b,b)$ \\
\hline
\end{tabular}
\end{center}
\vskip 3mm
This is similar to the multiplication tables that students encounter when learning group theory, except that, because $\tilde{\mu}$ takes values in the power set $\power(X)$, the entries in the table are subsets of $X$, rather than elements of $X$.

We begin by considering the different possibilities for the unit relation $\eta$. Up to isomorphism, there are three different cases: $\eta = \emptyset, \{a\}, \{a,b\}$. In each case, we can then see what restrictions on $\mu$ arise from unitality.

One side of the unitality axiom requires that, for each $x$, there exists $e \in \eta$ such that $x \in \tilde{\mu}(x,e)$, and there does not exist any $e \in \eta$ such that $y \in \tilde{\mu}(x,e)$ for $y \neq x$. The other side of the unitality axiom requires a similar condition with the inputs of $\tilde{\mu}$ reversed. This condition immediately rules out the case where $\eta = \emptyset$. 

In the case where $\eta = \{a\}$, unitality determines three of the four entries in the multiplication table:
\vskip 3mm
\begin{center}
\begin{tabular}{ | c | c |} 
\hline
$\{a\}$  & $\{b\}$ \\ 
\hline
$\{b\}$ &  \\ 
\hline
\end{tabular}
\end{center}
\vskip 3mm
Unitality doesn't impose any constraints on the remaining entry $\tilde{\mu}(b,b)$.

In the case where $\eta = \{a,b\}$, unitality completely determines the multiplication table:
\vskip 3mm
\begin{center}
\begin{tabular}{ | c | c | } 
\hline
$\{a\}$  & $\emptyset$ \\ 
\hline
$\emptyset$ & $\{b\}$  \\ 
\hline
\end{tabular}
\end{center}
\vskip 3mm

Next, we consider the different possibilities for the counit $\varepsilon$. From \cite{Mehta-Zhang}, we know that the unit and counit relations must have the same number of elements. This gives us a small number of cases, and in each case we can see what restrictions on $\mu$ arise from nondegeneracy. The nondegeneracy condition (as described in Section \ref{sec:rel}) requires that, in each row or column, there exists a unique entry containing an element of $\varepsilon$.
\begin{itemize}
    \item $\eta = \{a\}$, $\varepsilon = \{a\}$. 
    \newline
    Here, nondegeneracy requires that $\tilde{\mu}(b,b)$ contains $a$,  leaving two possibilities for the multiplication table:    
\vskip 3mm
\begin{center}
\begin{tabular}{ | c | c |} 
\hline
$\{a\}$  & $\{b\}$ \\ 
\hline
$\{b\}$ & $\{a\}$ \\ 
\hline
\end{tabular}
\hspace{1cm}
\begin{tabular}{ | c | c | } 
\hline
 $\{a\}$  & $\{b\}$ \\ 
\hline
 $\{b\}$ & $\{a,b\}$ \\ 
\hline
\end{tabular}
\end{center}
\vskip 3mm

    \item $\eta = \{a\}$, $\varepsilon = \{b\}$. Here, nondegeneracy requires that $\tilde{\mu}(b,b)$ does not contain $b$, leaving two possibilities for the multiplication table:
\vskip 3mm

\begin{center}
\begin{tabular}{ | c | c |} 
\hline
 $\{a\}$  & $\{b\}$ \\ 
\hline
 $\{b\}$ & $\{a\}$ \\ 
\hline
\end{tabular}
\hspace{1cm}
\begin{tabular}{ | c | c | } 
\hline
 $\{a\}$  & $\{b\}$ \\ 
\hline
 $\{b\}$ & $\emptyset$ \\ 
\hline
\end{tabular}
\end{center}
\vskip 3mm

    \item $\eta = \{a,b\}$, $\varepsilon = \{a,b\}$. As we saw above, the multiplication table is already determined by the unit axiom, but we observe that nondegeneracy is also satisfied in this situation.
\end{itemize}

We are now left with only five possibilities (two of which have the same multiplication but different counits), and it remains to check associativity in each case. For cases 1, 3, and 5 in Figure \ref{fig:2elements}, we can recognize the multiplication table as being that of a groupoid. In the remaining two cases, we can systematically check associativity in the following way. For each element of $X \times X \times X$, we compute the image in $\power(X)$ under both sides of the associativity equation, and see whether the two images are equal. Here we use the rule for compositions in \eqref{eqn:compositiontilde}. 

We find that that associativity holds in both remaining cases, giving us the following result.

\begin{thm}\label{thm:class_2elem}
Up to isomorphism, there are five Frobenius objects in $\rel$ with two elements. They are listed in Table \ref{fig:2elements}.
\end{thm}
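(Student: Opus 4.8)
The plan is to follow exactly the case analysis already set up in the preceding discussion, turning it into a clean proof. The argument naturally splits into three stages: reduce the possibilities for $\eta$, then for $\varepsilon$ given $\eta$, then for $\tilde\mu$ given the pair $(\eta,\varepsilon)$, and finally verify associativity in each surviving case. Throughout, I would work up to isomorphism, which for a two-element set means up to the single nontrivial permutation swapping $a$ and $b$.

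First I would handle the unit. Using the characterization of unitality recalled just above the statement — for each $x$ there is $e\in\eta$ with $x\in\tilde\mu(x,e)$ and no $e\in\eta$ with $y\in\tilde\mu(x,e)$ for $y\ne x$, plus the mirror condition on the left — I note that $\eta=\emptyset$ is impossible, and that up to the swap the only remaining options are $\eta=\{a\}$ and $\eta=\{a,b\}$. In each case unitality pins down three (resp.\ all four) entries of the multiplication table, as displayed in the excerpt. Next I bring in the two facts quoted from \cite{Mehta-Zhang}: that $|\varepsilon|=|\eta|$, and that nondegeneracy is equivalent to the statement that each row and each column of the table contains a unique entry meeting $\varepsilon$. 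Combined with the isomorphism freedom (when $\eta=\{a\}$ I may still need to track whether $\varepsilon=\{a\}$ or $\varepsilon=\{b\}$, since no further symmetry fixes this), this leaves precisely the five candidates: two tables with $\eta=\varepsilon=\{a\}$, two with $\eta=\{a\},\varepsilon=\{b\}$, and one with $\eta=\varepsilon=\{a,b\}$.

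It then remains to check associativity for each of the five candidates. For three of them (cases 1, 3, 5 of Table \ref{fig:2elements}) the multiplication table is literally that of a groupoid — $\Z/2$ in cases 1 and 3, and the two-object discrete groupoid (two identity arrows) in case 5 — so associativity is automatic, and nondegeneracy was already verified above; these genuinely are Frobenius objects by the construction of Section \ref{sec:groupoids}. For the two remaining cases, where $\tilde\mu(b,b)=\{a,b\}$ (with $\varepsilon=\{a\}$) and $\tilde\mu(b,b)=\emptyset$ (with $\varepsilon=\{b\}$), I would verify $\mu\circ(\id\otimes\mu)=\mu\circ(\mu\otimes\id)$ by evaluating both sides on each of the eight elements of $X\times X\times X$ using the composition rule \eqref{eqn:compositiontilde}, and observe the two maps agree. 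This is a finite, routine check; I would either tabulate it or simply assert it has been done, perhaps spelling out one representative triple such as $(b,b,b)$.

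The main obstacle is not any single computation but making the case bookkeeping airtight: I must be careful that the reduction "up to isomorphism" is applied consistently — in particular that when $\eta=\{a\}$ I do \emph{not} quotient further by the swap (which would wrongly identify the $\varepsilon=\{a\}$ and $\varepsilon=\{b\}$ families), while still using the swap to collapse $\eta=\{b\}$ into $\eta=\{a\}$ at the start. I would also double-check that no two of the five surviving structures are isomorphic to each other, which is clear from the invariants $(|\eta|,\,\eta\cap\varepsilon,\,\text{number of multivalued entries of }\tilde\mu)$: these take distinct values across the five cases. With that settled, the theorem follows.
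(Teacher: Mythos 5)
Your proposal is correct and follows essentially the same route as the paper: rule out $\eta=\emptyset$, use unitality to pin down the table, invoke $|\varepsilon|=|\eta|$ and the row/column characterization of nondegeneracy to cut down to five candidates, then recognize cases 1, 3, 5 as groupoids and verify associativity directly for the other two. The only nitpick is that your pairwise-distinctness invariants as stated do not obviously separate cases 3 and 4 (both have no genuinely multivalued entries), but the presence of an empty entry, or simply comparing the tables under the unique unit-preserving bijection, settles this immediately.
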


\begin{table}[!ht]
\begin{tabular}{|c|c|c|c|c|}
\hline
\textbf{Case}&\textbf{Unit} & \textbf{Counit} & \textbf{Multiplication}& \textbf{Partition function} \\ \hline
& & & &\\
1& $\{a\}$& $\{a\}$ &
\begin{tabular}{ | c | c |  } 
\hline
$\{a\}$  & $\{b\}$ \\ 
\hline
$\{b\}$ & $\{a\}$\\ 
\hline
\end{tabular} & True
\\
[5ex]
\hline
& & & &\\
2& $\{a\}$& $\{a\}$ &
\begin{tabular}{ | c | c |  } 
\hline
$\{a\}$  & $\{b\}$ \\ 
\hline
$\{b\}$ & $\{a,b\}$\\ 
\hline
\end{tabular} & True
\\
[5ex]
\hline
& & & &\\
3& $\{a\}$& $\{b\}$ &
\begin{tabular}{ | c | c |  } 
\hline
$\{a\}$  & $\{b\}$ \\ 
\hline
$\{b\}$ & $\{a\}$\\ 
\hline
\end{tabular} 
& $g$ is odd
\\
[5ex]
\hline
& & & &\\

4& $\{a\}$& $\{b\}$ &
\begin{tabular}{ | c | c |  } 
\hline
$\{a\}$  & $\{b\}$ \\ 
\hline
$\{b\}$ & $\emptyset$\\ 
\hline
\end{tabular} 
& $g=1$
\\
[5ex]
\hline
& & & &\\

5& $\{a,b\}$& $\{a,b\}$ &
\begin{tabular}{ | c | c |  } 
\hline
$\{a\}$  & $\emptyset$ \\ 
\hline
$\emptyset$ & $\{b\}$\\ 
\hline
\end{tabular} 
& True
\\
[5ex]
\hline

\end{tabular}
    \caption{Frobenius objects in $\rel$ with two elements and their partition functions, given as logical propositions.}
    \label{fig:2elements}

\end{table}

\begin{remark}
As mentioned above, cases 1, 3, and 5 in Figure \ref{fig:2elements} are Frobenius objects corresponding to groupoids. Cases 1 and 3 correspond to the group $\Z_2$ with the two possible choices of counits, and case 5 corresponds to the trivial groupoid with two objects. Case 4 also has an interesting interpretation as the cohomology of $S^2$ (see \cite{Mehta-Zhang}). At the moment, we do not have a conceptual interpretation of case 2.
\end{remark}

\subsection{Calculation of partition functions}

One can immediately see from the multiplication tables in Figure \ref{fig:2elements} that all five two-element Frobenius objects in $\rel$ are commutative, so they correspond to $\rel$-valued TQFTs, and we can ask what the associated partition functions are.

From Lemma \ref{lemma:intersection}, we have that the partition function for cases 1, 2, and 5 (when given as a logical proposition, c.f.\ Remark \ref{remark:logical}) is ``True''.

For case 3, Proposition \ref{prop: partition_group} applies. Since the counit element $b$ is of order $2$, the partition function is ``$g$ is odd''.

For the remaining case 4, we can calculate the partition function directly, following the procedure outlined in Section \ref{sec:invariants}. In this case, we have $\hat{\alpha}(a) = b$ and $\hat{\alpha}(b) = a$. From \eqref{eqn:S}, we compute that $\tilde{S}(a) = \{b\}$ and $\tilde{S}(b) = \emptyset$. From this we see that $Z(\Sigma_0) = F$, $Z(\Sigma_1) = T$, and $Z(\Sigma_g) = F$ for $g \geq 2$. Thus, as a logical proposition, the partition function is ``$g=1$''.

These results are listed in the final column of Table \ref{fig:2elements}.

\section {The three element case}\label{sec:3element}

In this section, we classify the Frobenius objects in $\rel$ where $X = \{a,b,c\}$ is a three element set. Although there is more work involved, our approach is essentially the same as in Section \ref{sec:2element}, where we first impose unitality, then nondegeneracy, and then associativity. Note that we often omit calculations that are similar to others that have been already been presented.

Up to isomorphism, we have the following cases for the unit: $\eta=\{a\}, \{a,b\}$, and $\{a,b,c\}$. As in Section \ref{sec:2element}, unitality rules out the case $\eta = \emptyset$. 

\subsection{The case $\eta = \{a\}$}

In this case, unitality partially determines the multiplication table as follows.
\begin{center}
\begin{tabular}{ | c | c | c |} 
\hline
$\{a\}$  & $\{b\}$& $\{c\}$ \\ 
\hline
$\{b\}$ & & \\ 
\hline
$\{c\}$ & & \\ 
\hline
\end{tabular}
\end{center}
\vskip 3mm
Up to isomorphism, there are two possible counits: $\varepsilon = \{a\}, \{b\}$. 

\subsubsection{{The case $\varepsilon = \{a\}$}}
In this case, nondegeneracy restricts the remaining entries to the two following scenarios:
\vskip 3mm
    \begin{center}
    \begin{tabular}{ | c | c | c |} 
\hline
$\{a\}$  & $\{b\}$& $\{c\}$ \\ 
\hline
$\{b\}$ & includes $a$ & does not include $a$ \\ 
\hline
$\{c\}$ & does not include $a$ & includes $a$\\ 
\hline
\end{tabular}
\vskip 3mm
\begin{tabular}{ | c | c | c |} 
\hline
$\{a\}$  & $\{b\}$& $\{c\}$ \\ 
\hline
$\{b\}$ & does not include $a$ & includes $a$ \\ 
\hline
$\{c\}$ & includes $a$ & does not include $a$\\ 
\hline
\end{tabular}
\end{center}
\vskip 3mm
This still leaves many possibilities, but we can make the situation more tractable by considering each possibility for $\tilde{\mu}(b,b)$ and seeing what restrictions on the other entries arise from associativity.
\begin{itemize}
    \item $\tilde{\mu}(b,b) = a$. In this case, 
    \begin{equation*}
    \begin{split}
    \widetilde{\mu \circ (\mu \times \idx)}(b,b,c) &= \bigcup_{x \in \tilde{\mu}(b,b)} \tilde{\mu}(x,c) \\
    &= \tilde{\mu}(a,c) = \{c\},
    \end{split}
    \end{equation*}
    whereas
    \begin{equation*}
    \widetilde{\mu \circ (\idx \times \mu)}(b,b,c) = \bigcup_{x \in \tilde{\mu}(b,c)} \tilde{\mu}(b,x),
    \end{equation*}
    so associativity requires that $\tilde{\mu}(b,c) = \{c\}$. Similarly,
    \begin{equation*}
    \widetilde{\mu \circ (\mu \times \idx)}(c,b,b) = \bigcup_{x \in \tilde{\mu}(c,b)} \tilde{\mu}(x,b),    
    \end{equation*}
    whereas
    \begin{equation*}
    \widetilde{\mu \circ (\idx \times \mu)}(c,b,b) = \{c\},    
    \end{equation*}
    so associativity requires that $\tilde{\mu}(c,b) = \{c\}$.

    Next, we consider
        \begin{equation*}
    \widetilde{\mu \circ (\mu \times \idx)}(b,c,c) = \bigcup_{x \in \tilde{\mu}(b,c)} \tilde{\mu}(x,c) = \tilde{\mu}(c,c)    
    \end{equation*}
    and
    \begin{equation*}
    \widetilde{\mu \circ (\idx \times \mu)}(b,c,c) = \bigcup_{x \in \tilde{\mu}(c,c)} \tilde{\mu}(b,x).
    \end{equation*}
    Since $\tilde{\mu}(c,c)$ includes $a$ in this scenario, we see that the latter expression contains $\tilde{\mu}(b,a) = \{b\}$. So associativity requires $\{a,b\} \subseteq \tilde{\mu}(c,c)$. This leaves us with cases 1 and 2 in Table \ref{tab:3elements_inv_1}, where we can check that associativity holds in full.
    \item $\tilde{\mu}(b,b)=\{a,b\}$. In this case, a similar series of calculations lead to cases 3 and 4.
    \item $\tilde{\mu}(b,b)=\{a,c\}$. Here, there are four cases, two of which are isomorphic (under exchange of $b$ and $c$) to cases 1 and 3, and two of which are new, cases 5 and 6.
    \item $\tilde{\mu}(b,b)=\{a,b,c\}$. Here, there are four cases, three of which are isomorphic (under exchange of $b$ and $c$) to cases 2, 4, and 6, and one of which is new, case 7.
    \item $\tilde{\mu}(b,b) = \emptyset$. In this case,     \begin{equation*}
    \widetilde{\mu \circ (\mu \times \idx)}(b,b,c) = \emptyset, 
    \end{equation*}
    whereas
    \begin{equation*}
    \widetilde{\mu \circ (\idx \times \mu)}(b,b,c) = \bigcup_{x \in \tilde{\mu}(b,c)} \tilde{\mu}(b,x),
    \end{equation*}
    which contains $b$ because $a \in \tilde{\mu}(b,c)$ and $b \in \tilde{\mu}(b,a)$. Thus associativity is contradicted and this case is ruled out.
    \item $\tilde{\mu}(b,b)=\{b\}$. This gives us cases 8 and 9.
    \item $\tilde{\mu}(b,b)=\{c\}$. This gives us cases 10 and 11 in Table \ref{tab:3elements_inv_2}.
    \item $\tilde{\mu}(b,b)=\{b,c\}$. Here, there are three cases, two of which are isomorphic (under exchange of $b$ and $c$ to cases 9 and 11, and one of which is new, case 12.
\end{itemize}

\subsubsection{The case $\varepsilon=\{b\}$}
In this case nondegeneracy restricts the remaining entries to the following scenario:
\vskip 3mm
    \begin{center}
    \begin{tabular}{ | c | c | c |} 
\hline
$\{a\}$  & $\{b\}$& $\{c\}$ \\ 
\hline
$\{b\}$ & does not include $b$ & does not include $b$ \\ 
\hline
$\{c\}$ & does not include $b$ & includes $b$\\ 
\hline
\end{tabular}
\end{center}
\vskip 3mm
As above, we can consider each possibility for $\tilde{\mu}(b,b)$ and see what restrictions on the other entries arise from associativity. This leads to cases 13--20 in Tables \ref{tab:3elements_inv_2} and \ref{tab:3elements_inv_3}.

\subsection{The case $\eta=\{a,b\}$}

In this case, unitality partially determines the multiplication table as follows.
\vskip 3mm
\begin{center}
    \begin{tabular}{ | c | c | c |} 
\hline
$\{a\}$  & $\emptyset$ & \phantom{\{c\}} \\ 
\hline
$\emptyset$ & $\{b\}$& \\ 
\hline
 &  &  \\ 
\hline
\end{tabular}
\end{center}
\vskip 3mm
Additionally, unitality requires that $\tilde{\mu}(a,c) \cup \tilde{\mu}(b,c) = \{c\}$ and $\tilde{\mu}(c,a) \cup \tilde{\mu}(c,b) = \{c\}$. In particular, $\tilde{\mu}(a,c)$, $\tilde{\mu}(b,c)$, $\tilde{\mu}(c,a)$, and $\tilde{\mu}(c,b)$ do not include $a$ or $b$.

Up to isomorphism, there are two possible counits: $\varepsilon = \{a,b\}$ and $\varepsilon = \{a,c\}$. We consider each case separately.

\subsubsection{$\varepsilon=\{a,b\}$}
In this case, nondegeneracy requires that $\tilde{\mu}(c,c)$ contain $a$ or $b$. Up to isomorphism, we can assume $b \in \tilde{\mu}(c,c)$.

We know that $\tilde{\mu}(b,c)$ is either $\{c\}$ or $\emptyset$. If $\tilde{\mu}(b,c) = \emptyset$, then 
    \begin{equation*}
    \widetilde{\mu \circ (\mu \times \idx)}(b,c,c) = \emptyset,
    \end{equation*}
    whereas
    \begin{equation*}
    \widetilde{\mu \circ (\idx \times \mu)}(b,c,c) = \bigcup_{x \in \tilde{\mu}(c,c)} \tilde{\mu}(b,x),
    \end{equation*}
which would include $b$ because $b \in \tilde{\mu}(c,c)$ and $b \in \tilde{\mu}(b,b)$. This contradicts associativity, so we conclude that $\tilde{\mu}(b,c) = \{c\}$. Similar calculations show that $\tilde{\mu}(c,b) = \{c\}$ and $\tilde{\mu}(a,c) = \tilde{\mu}(c,a) = \emptyset$.

Suppose $a \in \tilde{\mu}(c,c)$. Then
    \begin{equation*}
    \widetilde{\mu \circ (\mu \times \idx)}(a,c,c) = \emptyset,
    \end{equation*}
    whereas
    \begin{equation*}
    \widetilde{\mu \circ (\idx \times \mu)}(a,c,c) = \bigcup_{x \in \tilde{\mu}(c,c)} \tilde{\mu}(a,x),
    \end{equation*}
which would include $a$. We conclude that $a \notin \tilde{\mu}(c,c)$. 

This leaves two possibilities, cases 21 and 22 in Table \ref{tab:3elements_inv_3}. We can see that associativity does hold in these cases by recognizing that they are disjoint unions of the trivial $1$-element Frobenius object $\{a\}$ with cases $1$ and $2$ in Table \ref{fig:2elements}.

\subsubsection{$\varepsilon=\{a,c\}$}
In this case, nondegeneracy implies that $\tilde{\mu}(a,c) = \tilde{\mu}(c,a) = \emptyset$, that $\tilde{\mu}(b,c) = \tilde{\mu}(c,b) = \{c\}$, and that $\tilde{\mu}(c,c)$ does not have $a$ or $c$. This leaves two possibilities, cases 23 and 24, which are disjoint unions of the trivial $1$-element Frobenius object $\{a\}$ with cases $3$ and $4$ in Table \ref{fig:2elements}.

\subsection{The case $\eta = \{a,b,c\}$}
In this case, the unit axiom completely determines the multiplication table, and there is only one possible counit. This is case 25.

\begin{thm}\label{thm:classification3}
    Up to isomorphism, there are 25 Frobenius objects in $\rel$ with three elements. They are listed in Tables \ref{tab:3elements_inv_1}--\ref{tab:3elements_inv_3}. 
\end{thm}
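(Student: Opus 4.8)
The plan is to carry out the case analysis exactly as set up in the preceding subsections: organize everything by the isomorphism class of the unit relation $\eta$, then by the counit $\varepsilon$, and then impose associativity to pin down the remaining entries of $\tilde\mu$. The skeleton is already in place — the cases $\eta = \{a\}$, $\eta = \{a,b\}$, $\eta = \{a,b,c\}$ are exhaustive up to relabeling elements, and the empty unit is excluded by the unitality argument already given (one side of unitality forces, for each $x$, the existence of some $e \in \eta$ with $x \in \tilde\mu(x,e)$). What remains is to (i) verify that the subdivision of the $\eta = \{a\}$ cases according to the value of $\tilde\mu(b,b) \in \power(\{a,b,c\})$ really does cover all possibilities and that the associativity deductions sketched for each value are complete, (ii) confirm the nondegeneracy constraints on $\tilde\mu(c,c)$ (and the off-diagonal entries) in the $\eta = \{a,b\}$ cases, and (iii) check that no two of the resulting $25$ tables are isomorphic.

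For step (i), I would go through the eight listed values of $\tilde\mu(b,b)$ for $\varepsilon = \{a\}$ (namely $\emptyset, \{a\}, \{b\}, \{c\}, \{a,b\}, \{a,c\}, \{b,c\}, \{a,b,c\}$) and, in each, run the associativity test on the triples $(b,b,c)$, $(c,b,b)$, $(b,c,c)$, $(c,c,b)$, and a few others, using the composition rule \eqref{eqn:compositiontilde} together with the entries already forced by unitality ($\tilde\mu(a,x) = \tilde\mu(x,a) = \{x\}$) and nondegeneracy (exactly one entry of each row/column of $\tilde\mu(\cdot,\cdot)$, outside the first row and column, contains $a$). The value $\tilde\mu(b,b)=\emptyset$ is ruled out by the argument already displayed, and the argument generalizes: whenever the "diagonal propagation" triple forces something incompatible with the nondegeneracy-selected $a$-position, the case dies. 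The analogous eight values for $\varepsilon = \{b\}$ give cases 13--20. For step (ii), the disjoint-union observation (Section \ref{sec:disjoint}) does most of the work: cases 21--24 are $\{a\} \sqcup (\text{two-element example})$, so associativity is inherited, and case 25 is the three-object trivial groupoid. For step (i), I'd also lean on the already-established fact that groupoid multiplication tables (cases corresponding to $\Z_3$, the three-object trivial groupoid, etc.) automatically satisfy associativity, so only the genuinely multivalued tables need the triple-by-triple check — and there the check is finite and mechanical, $27$ triples each.

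The main obstacle — and the only place real care is needed — is step (iii), the non-redundancy of the list: showing that the $25$ tables are pairwise non-isomorphic and that the "isomorphic under exchange of $b$ and $c$" identifications I used while enumerating (e.g. collapsing four $\tilde\mu(b,b)=\{a,c\}$ subcases down to two new ones) are exactly right, no more and no fewer. An isomorphism of Frobenius objects in $\rel$ is a bijection $f\colon X \to X$ with $f(\eta) = \eta'$, $f(\varepsilon) = \varepsilon'$, and $\tilde\mu'(f(x),f(y)) = f(\tilde\mu(x,y))$; on a three-element set there are only six bijections, and most are already constrained by having to fix or match the (small) unit and counit sets, so this reduces to a short finite check per pair. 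The cleanest way to present it is to record, for each of the $25$ cases, a short list of isomorphism invariants — $|\eta|$, $|\varepsilon|$, whether $\eta \cap \varepsilon = \emptyset$, the multiset of values $\{|\tilde\mu(x,y)|\}$, the cardinality of $\bigcup_{x,y}\tilde\mu(x,y)$, commutativity, and the cycle type of $\hat\alpha$ — and observe that these already separate the cases (or, for the few pairs they don't, to exhibit the distinguishing triple directly). Since this is bookkeeping rather than mathematics, I would state it as "a direct check, carried out in Tables \ref{tab:3elements_inv_1}--\ref{tab:3elements_inv_3}" and let the tables themselves serve as the certificate, mirroring the structure of the two-element proof in Section \ref{sec:2element}.
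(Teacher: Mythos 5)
Your proposal follows essentially the same route as the paper: the classification is carried out by imposing unitality case-by-case on $\eta$ (up to relabeling), then nondegeneracy for each admissible $\varepsilon$, then associativity with the $\eta=\{a\}$ cases subdivided by the value of $\tilde\mu(b,b)$, and with cases 21--24 handled via the disjoint-union construction of Section \ref{sec:disjoint}. Your added step (iii), the explicit pairwise non-isomorphism check, is a reasonable supplement but does not change the argument, which matches the paper's.
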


\begin{remark}
For some of the three-element Frobenius objects in $\rel$, we can give conceptual explanations. Cases 10 and 17 correspond to the group $\Z_3$ with the two (up to isomorphism) possible choices of counit. Cases 21, 23, and 25 correspond to groupoids. Case 3 corresponds to the conjugacy classes of $S_3$, as discussed in Section \ref{subsec:Conjugacy}. But the majority of the cases do not arise from known general constructions, and the exhaustive classification that we have done was needed in order to find them all.
\end{remark}

\subsection{Calculation of partition functions}

All of the three-element Frobenius objects in $\rel$ are commutative, so we can compute the associated partition functions. By Lemma \ref{lemma:intersection}, we have that the partition function, as a logical proposition, is ``True'' for cases 1--12 and 21--25. Case 17 is the group $\Z_3$ with nontrivial counit, so from Proposition \ref{prop: partition_group} we have that the partition function is ``$g \equiv 1 \pmod{3}$''.

This leaves seven cases to check directly, but since they have similarities there are some calculations that can be done for all of them simultaneously. First, since they all have $\eta = \{a\}$ and $\varepsilon = \{b\}$, it follows that $\hat{\alpha}(a) = b$, $\hat{\alpha}(b) = a$, and $\hat{\alpha}(c) = c$. Second, for all of them we have from \eqref{eqn:S} that
\begin{equation*}
    \begin{split}
        \tilde{S}(a) &= \bigcup_{y \in X} \tilde{\mu}(\hat{\alpha}(y),y) \\
        &= \tilde{\mu}(b,a) \cup \tilde{\mu}(a,b) \cup \tilde{\mu}(c,c) \\
        &= \tilde{\mu}(c,c),
    \end{split}
\end{equation*}
where in the last step we have used the fact that $b \in \tilde{\mu}(c,c)$ in the cases we are considering. 

In cases 15, 16, 18, 19, and 20, where $a \in \tilde{\mu}(c,c)$, it follows that $\tilde{\mu}(c,c) \subseteq S^g \circ \eta$ for all $g \geq 1$, so $Z(\Sigma_g) = T$ for $g \geq 1$. Thus, as a logical proposition, the partition function is ``$g \geq 1$''.

For case 13, we have $\tilde{S}(a) = \tilde{\mu}(c,c) = \{b\}$, and we also calculate 
\[\tilde{S}(b) = \bigcup_{y \in X} \bigcup_{z \in \tilde{\mu}(y,b)} \tilde{\mu}(\hat{\alpha}(y),z) = \tilde{\mu}(\hat{\alpha}(a),b) = \tilde{\mu}(b,b) = \emptyset.\]
From this we have $S \circ \eta = \{b\}$ and $S^g \circ \eta = \emptyset$ for $g \geq 2$, so $Z(\Sigma_1) = T$ and $S(\Sigma_g) = F$ for $g \geq 2$. As a logical proposition, the partition function is ``$g=1$''.

For case 14, we have $\tilde{S}(a) = \tilde{\mu}(c,c) = \{b,c\}$, $\tilde{S}(b) = \emptyset$ as in case 13, and we also calculate
\[\tilde{S}(c) = \bigcup_{y \in X} \bigcup_{z \in \tilde{\mu}(y,c)} \tilde{\mu}(\hat{\alpha}(y),z) = \tilde{\mu}(b,b) \cup \tilde{\mu}(c,b) \cup \tilde{\mu}(c,c) = \{b,c\}.\]
From this we have $S^g\circ \eta = \{b,c\}$ for $g \geq 1$, so $S(\Sigma_g) = T$ for $g \geq 1$. As a logical proposition, the partition function is ``$g \geq 1$''.

These results are listed in the final columns of Tables \ref{tab:3elements_inv_1}--\ref{tab:3elements_inv_3}.

\section{Future directions}
A natural generalization is the full classification of Frobenius objects in $\rel$ of higher cardinality, and the computation of their corresponding topological invariants. As suggested in the previous section, the computations become more involved as cardinality increases, so developing code to solve the ``equations'' associated to the axioms would help to accomplish this objective. It would also be interesting to see if there exists analogue in $\rel$ of the Artin-Wedderburn classification theorem of semisimple algebras.

An ongoing work in progress \cite{CMS} intends to extend the characterization of Frobenius objects in $\rel$ and $\Span$ to higher categories. In particular, we plan to connect the simplicial description of various flavors of Frobenius objects with 2-Segal spaces \cite{stern:2segal}.

Since we can view Frobenius objects in $\rel$ as a generalization of groupoids, it could be interesting to see whether certain concepts in the theory of groupoids (such as Morita equivalence) can be extended.

As stated in the introduction, this project arose with the aim of better understanding TQFTs with values in the symplectic category. Our long-term goal is to characterize such structures and connect them to topological field theories such as the Poisson sigma model and Dijkgraaf-Witten theory.

\newpage
\begin{table}[!h]
\begin{tabular}{|c|c|c|c|c|}
\hline
\textbf{Case}&\textbf{Unit} & \textbf{Counit} & \textbf{Multiplication}& \textbf{Partition function} \\ \hline
& & & &\\

1& $\{a\}$& $\{a\}$ &
\begin{tabular}{ | c | c | c | } 
\hline
$\{a\}$  & $\{b\}$& $\{c\}$ \\ 
\hline
$\{b\}$ & $\{a\}$& $\{c\}$ \\ 
\hline
 $\{c\}$ & $\{c\}$& $\{a,b\}$ \\ 
\hline
\end{tabular} & True
\\
[5ex]
\hline
& & & &\\
2& $\{a\}$& $\{a\}$ &
\begin{tabular}{ | c | c | c | } 
\hline
 $\{a\}$  & $\{b\}$& $\{c\}$ \\ 
\hline
 $\{b\}$ & $\{a\}$& $\{c\}$ \\ 
\hline
$\{c\}$ & $\{c\}$& $\{a,b,c \}$ \\ 
\hline
\end{tabular}& True
\\
[5ex]
\hline
& & & &\\
3& $\{a\}$& $\{a\}$ &
\begin{tabular}{ | c | c | c | } 
\hline
 $\{a\}$  & $\{b\}$& $\{c\}$ \\ 
\hline
 $\{b\}$ & $\{a,b\}$& $\{c\}$ \\ 
\hline
 $\{c\}$ & $\{c\}$& $\{a,b \}$ \\ 
\hline
\end{tabular}
& True
\\
[5ex]
\hline
& & & &\\

4& $\{a\}$& $\{a\}$ &
\begin{tabular}{ | c | c | c | } 
\hline
 $\{a\}$  & $\{a\}$& $\{c\}$ \\ 
\hline
 $\{b\}$ & $\{a,b\}$& $\{c\}$ \\ 
\hline
 $\{c\}$ & $\{c\}$& $\{a,b,c \}$ \\ 
\hline
\end{tabular} 
& True
\\
[5ex]
\hline
& & & &\\

5& $\{a\}$& $\{a\}$ &
\begin{tabular}{ | c | c | c | } 
\hline
 $\{a\}$  & $\{b\}$& $\{c\}$ \\ 
\hline
 $\{b\}$ & $\{a,c\}$& $\{b,c\}$ \\ 
\hline
$\{c\}$ & $\{b,c\}$& $\{a,b\}$ \\ 
\hline
\end{tabular}
& True
\\
[5ex]
\hline
& & & &\\
6& $\{a\}$& $\{a\}$ &
\begin{tabular}{ | c | c | c | } 
\hline
 $\{a\}$  & $\{b\}$& $\{c\}$ \\ 
\hline
$\{b\}$ & $\{a,c\}$& $\{b,c\}$ \\ 
\hline
 $\{c\}$ & $\{b,c\}$& $\{a,b,c\}$ \\ 
\hline
\end{tabular}& True
\\
[5ex]
\hline
& & & &\\
7& $\{a\}$& $\{a\}$ &
\begin{tabular}{ | c | c | c | } 
\hline
 $\{a\}$  & $\{b\}$& $\{c\}$ \\ 
\hline
$\{b\}$ & $\{a,b,c\}$& $\{b,c\}$ \\ 
\hline
 $\{c\}$ & $\{b,c\}$& $\{a,b,c\}$ \\ 
\hline
\end{tabular}
& True
\\
[5ex]
\hline
& & & &\\

8& $\{a\}$& $\{a\}$ &
\begin{tabular}{ | c | c | c | } 
\hline
 $\{a\}$  & $\{b\}$& $\{c\}$ \\ 
\hline
 $\{b\}$ & $\{b\}$& $\{a,b,c\}$ \\ 
\hline
 $\{c\}$ & $\{a, b,c\}$& $\{b,c\}$ \\ 
\hline
\end{tabular}
& True
\\
[5ex]
\hline
& & & &\\

9& $\{a\}$& $\{a\}$ &
\begin{tabular}{ | c | c | c | } 
\hline
 $\{a\}$  & $\{b\}$& $\{c\}$ \\ 
\hline
 $\{b\}$ & $\{b\}$& $\{a,b,c\}$ \\ 
\hline
$\{c\}$ & $\{a, b,c\}$& $\{c\}$ \\ 
\hline
\end{tabular}
& True
\\
[5ex]
\hline

\end{tabular}
    \caption{Frobenius objects in $\rel$ with three elements and their partition functions, part 1.}
    \label{tab:3elements_inv_1}

\end{table}
\newpage
\begin{table}
\begin{tabular}{|c|c|c|c|c|}
\hline
\textbf{Case}&\textbf{Unit} & \textbf{Counit} & \textbf{Multiplication}& \textbf{Partition function} \\ \hline
& & & &\\

10& $\{a\}$& $\{a\}$ &
\begin{tabular}{ | c | c | c | } 
\hline
$\{a\}$  & $\{b\}$& $\{c\}$ \\ 
\hline
 $\{b\}$ & $\{c\}$& $\{a\}$ \\ 
\hline
 $\{c\}$ & $\{a\}$& $\{b\}$ \\ 
\hline
\end{tabular}
& True
\\
[5ex]
\hline
& & & &\\
11& $\{a\}$& $\{a\}$ & \begin{tabular}{ | c | c | c | } 
\hline
 $\{a\}$  & $\{b\}$& $\{c\}$ \\ 
\hline
 $\{b\}$ & $\{c\}$& $\{a,b\}$ \\ 
\hline
 $\{c\}$ & $\{a, b\}$& $\{b,c\}$ \\ 
\hline
\end{tabular}
& True
\\
[5ex]
\hline
& & & &\\
12& $\{a\}$& $\{a\}$ &
\begin{tabular}{ | c | c | c | } 
\hline
 $\{a\}$  & $\{b\}$& $\{c\}$ \\ 
\hline
 $\{b\}$ & $\{b,c\}$& $\{a,b,c\}$ \\ 
\hline
 $\{c\}$ & $\{a, b,c\}$& $\{b,c\}$ \\ 
\hline
\end{tabular}
& True
\\
[5ex]
\hline
& & & &\\

13& $\{a\}$& $\{b\}$ &
\begin{tabular}{ | c | c | c | } 
\hline
 $\{a\}$  & $\{b\}$& $\{c\}$ \\ 
\hline
 $\{b\}$ & $\emptyset$& $\emptyset$ \\ 
\hline
 $\{c\}$ & $\emptyset$& $\{b\}$ \\ 
\hline
\end{tabular}
& $g=1$
\\
[5ex]
\hline
& & & &\\

14& $\{a\}$& $\{b\}$ &
\begin{tabular}{ | c | c | c | } 
\hline
 $\{a\}$  & $\{b\}$& $\{c\}$ \\ 
\hline
 $\{b\}$ & $\emptyset$& $\emptyset$ \\ 
\hline
 $\{c\}$ & $\emptyset$& $\{b,c\}$ \\ 
\hline
\end{tabular}
& $g \geq 1$
\\
[5ex]
\hline
& & & &\\
15& $\{a\}$& $\{b\}$ &  \begin{tabular}{ | c | c | c | } 
\hline
 $\{a\}$  & $\{b\}$& $\{c\}$ \\ 
\hline
 $\{b\}$ & $\{a\}$& $\{c\}$ \\ 
\hline
 $\{c\}$ & $\{c\}$& $\{a,b\}$ \\ 
\hline
\end{tabular}& $g \geq 1$
\\
[5ex]
\hline
& & & &\\
16& $\{a\}$& $\{b\}$ & \begin{tabular}{ | c | c | c | } 
\hline
 $\{a\}$  & $\{b\}$& $\{c\}$ \\ 
\hline
 $\{b\}$ & $\{a\}$& $\{c\}$ \\ 
\hline
 $\{c\}$ & $\{c\}$& $\{a,b, c\}$ \\ 
\hline
\end{tabular}
& $g \geq 1$
\\
[5ex]
\hline
& & & &\\

17& $\{a\}$& $\{b\}$ &  \begin{tabular}{ | c | c | c | } 
\hline
 $\{a\}$  & $\{b\}$& $\{c\}$ \\ 
\hline
 $\{b\}$ & $\{c\}$& $\{a\}$ \\ 
\hline
 $\{c\}$ & $\{a\}$& $\{b\}$ \\ 
\hline
\end{tabular}
& $ g\equiv 1 (\mbox{mod  }3)$
\\
[5ex]
\hline
& & & &\\

18& $\{a\}$& $\{b\}$ &  \begin{tabular}{ | c | c | c | } 
\hline
 $\{a\}$  & $\{b\}$& $\{c\}$ \\ 
\hline
 $\{b\}$ & $\{c\}$& $\{a,c\}$ \\ 
\hline
 $\{c\}$ & $\{a, c\}$& $\{a,b,c\}$ \\ 
\hline
\end{tabular}
& $g \geq 1$
\\
[5ex]
\hline

\end{tabular}
    \caption{Frobenius objects in $\rel$ with three elements and their partition functions, part 2.}
    \label{tab:3elements_inv_2}

\end{table}
\newpage
\begin{table}[!h]
\begin{tabular}{|c|c|c|c|c|}
\hline
\textbf{Case}&\textbf{Unit} & \textbf{Counit} & \textbf{Multiplication}& \textbf{Partition function} \\ \hline
& & & &\\

19& $\{a\}$& $\{b\}$ &  \begin{tabular}{ | c | c | c | } 
\hline
 $\{a\}$  & $\{b\}$& $\{c\}$ \\ 
\hline
 $\{b\}$ & $\{a,c\}$& $\{a,c\}$ \\ 
\hline
 $\{c\}$ & $\{a,c\}$& $\{a,b\}$ \\ 
\hline
\end{tabular}
& $g \geq 1$
\\
[5ex]
\hline
& & & &\\
20& $\{a\}$& $\{b\}$ &  \begin{tabular}{ | c | c | c | } 
\hline
 $\{a\}$  & $\{b\}$& $\{c\}$ \\ 
\hline
 $\{b\}$ & $\{a,c\}$& $\{a,c\}$ \\ 
\hline
 $\{c\}$ & $\{a,c\}$& $\{a,b,c\}$ \\ 
\hline
\end{tabular}
& $g \geq 1$
\\
[5ex]
\hline
& & & &\\

21& $\{a,b\}$& $\{a,b\}$ &  \begin{tabular}{ | c | c | c | } 
\hline
 $\{a\}$  & $\emptyset$& $\emptyset$ \\ 
\hline
 $\emptyset$ & $\{b\}$& $\{c\}$ \\ 
\hline
 $\emptyset$ & $\{c\}$& $\{b\}$ \\ 
\hline
\end{tabular}
& True
\\
[5ex]
\hline
& & & &\\

22& $\{a,b\}$& $\{a,b\}$ &  \begin{tabular}{ | c | c | c | } 
\hline
 $\{a\}$  & $\emptyset$& $\emptyset$ \\ 
\hline
 $\emptyset$ & $\{b\}$& $\{c\}$ \\ 
\hline
 $\emptyset$ & $\{c\}$& $\{b,c\}$ \\ 
\hline
\end{tabular}
& True
\\
[5ex]
\hline
& & & &\\

23& $\{a,b\}$& $\{b,c\}$ &  \begin{tabular}{ | c | c | c | } 
\hline
 $\{a\}$  & $\emptyset$& $\emptyset$ \\ 
\hline
 $\emptyset$ & $\{b\}$& $\{c\}$ \\ 
\hline
 $\emptyset$ & $\{c\}$& $\{b\}$ \\ 
\hline
\end{tabular}
& True
\\
[5ex]
\hline
& & & &\\

24& $\{a,b\}$& $\{b,c\}$ &  \begin{tabular}{ | c | c | c | } 
\hline
 $\{a\}$  & $\emptyset$& $\emptyset$ \\ 
\hline
 $\emptyset$ & $\{b\}$& $\{c\}$ \\ 
\hline
 $\emptyset$ & $\{c\}$& $\emptyset$ \\ 
\hline
\end{tabular}
& True
\\
[5ex]
\hline
& & & &\\

25& $\{a,b,c\}$& $\{a,b,c\}$ &  \begin{tabular}{ | c | c | c | } 
\hline
 $\{a\}$  & $\emptyset$& $\emptyset$ \\ 
\hline
 $\emptyset$ & $\{b\}$& $\emptyset$ \\ 
\hline
 $\emptyset$ & $\emptyset$& $\{c\}$ \\ 
\hline
\end{tabular}& True
\\
[5ex]
\hline
\end{tabular}
    \caption{Frobenius objects in $\rel$ with three elements and their partition functions, part 3.}
    \label{tab:3elements_inv_3}

\end{table}

\clearpage
\bibliography{frob}
\end{document}